\documentclass[11pt]{article}
\usepackage{pdfsync}
\usepackage{amsfonts}
\usepackage{amsfonts}  
\usepackage{amsthm}
\usepackage{amsmath}
\usepackage{color}

\title{A  Liouville theorem for fully nonlinear  problems  with  infinite boundary conditions and applications} 

\author{Isabeau Birindelli \\
Dipartimento di Matematica, Sapienza Universit\`a\  di Roma
\and
  Fran\c{c}oise Demengel\\
  D\'epartement de Math\'ematiques,
Universit\'e\  de Cergy-Pontoise
  \and
   Fabiana  Leoni\\ 
   Dipartimento di Matematica, Sapienza Universit\`a\  di Roma}

\date{}

\catcode`@=11 \@addtoreset{equation}{section} \catcode`@=12

\newtheorem{theo}{Theorem}[section]
\newtheorem{prop}[theo]{Proposition}

\newtheorem{lemme}[theo]{Lemma}

\def\R{\mathbb  R}
\def\grad{\nabla}

\setlength{\textwidth}{14.15cm}
\setlength{\oddsidemargin}{0.5cm}

\begin{document}

\maketitle
\begin{abstract}
We prove a Liouville type classification theorem in half-spaces for  infinite boundary value problems related to fully nonlinear, uniformly elliptic operators. We then apply the result in order to obtain gradient boundary blow up rates for ergodic functions in bounded domains related to degenerate/singular operators, and, as a further consequence, we deduce the uniqueness of the ergodic functions.
\smallskip

\emph{2010 Mathematical Subject Classification }: 35J70, 35J75.
\end{abstract}

\section{Introduction}

A classical result from harmonic function theory, see e.g. \cite{ABR}, states that if $v$ is a positive harmonic function in the upper half-space $\R^N_+=\{ x=(x',x_N)\in \R^N\ :\ x_N>0\}$, vanishing on $\partial \R^N_+$, then, necessarily, $v(x)=c\, x_N$ for some $c>0$.
 
 By performing the change of unknown
 $$ v(x)= e^{-u(x)}\, ,$$
 the same result may be read in terms of $u$ as
 $$
 \left\{
 \begin{array}{c}
 -\Delta u +|\nabla u|^2=0 \ \hbox{ in } \R^N_+\\[1ex]
 u=+\infty \ \hbox{ on } \partial \R^N_+
 \end{array}
 \right.
 \ \Longleftrightarrow \ u(x)= -\log x_N + c\, ,\ c\in \R\, .
 $$
 We are interested here in extending the above result to more general equations. Namely, 
  we focus on the second order, fully nonlinear infinite boundary value problem
  \begin{equation}\label{eqergo}\left\{ \begin{array}{cl}
   -F( D^2 u) +| \nabla u |^\beta = 0& {\rm in} \ \R_+ ^N\\[2ex]
    u( x^\prime,  0) = +\infty &  {\rm on} \ \partial\R_+ ^N
    \end{array}
\right.
\end{equation} 
where  $\beta >1$ and $F:{\mathcal S}_N\to \R$ is a positively homogeneous  uniformly elliptic operator, that is a continuous function defined on the space of $N\times N$-square symmetric matrices ${\mathcal S}_N$, positively homogeneous of degree 1, and satisfying 
\begin{equation}\label{ue}
a\, {\rm tr} (P)\leq F(X+P)-F(X)\leq A\, {\rm tr} (P)\, ,
\end{equation}
for all $X, P\in {\mathcal S}_N$, with $P\geq O$, for some positive constants $A\geq a>0$. In \eqref{eqergo}, $D^2u$ stands for the hessian matrix of the unknown function $u$, and $\nabla u$ for its gradient.
\smallskip

We assume that $u\in C(\R^N_+)$ is a viscosity solution of \eqref{eqergo},  satisfying further
\begin{itemize}

\item[(H1)]  the boundary condition is uniformly satisfied, namely
  $$\forall \, M>0\, ,\ \exists\,  \epsilon_M>0\ \hbox{ such that } u(x^\prime, x_N) \geq M, \  \forall\,  x^\prime \in \R^{N-1}\ \hbox{ and } x_N\leq \epsilon_M\, ,$$
  
\item[(H2)]  $u$ is bounded for $x_N$ bounded away from zero and from infinity, that is
   $$ \forall \, \delta\in (0,1)\, ,\ \exists\, C_\delta>0\ \hbox{ such that } |u(x^\prime, x_N)| \leq C_\delta, \  \forall\,  x^\prime \in \R^{N-1}\ \hbox{ and }  x_N\in \left[ \delta, \frac{1}{\delta}\right] $$
\end{itemize}
   
Our main result is the following Liouville type classification theorem.
   
    \begin{theo}\label{liouville} Let $1<\beta\leq 2$ and assume that $F$ satisfies \eqref{ue}. If $u\in C(\R^N_+)$ is a viscosity solution of \eqref{eqergo} satisfying assumptions {\rm (H1)} and {\rm (H2)}, then, for some $c\in \R$,
         $$u(x^\prime, x_N) \equiv u(x_N)=
         \left\{
\begin{array}{ll}
\displaystyle  \frac{1}{2-\beta}\left(\frac{F(e_N\otimes e_N)^{\frac{1}{2-\beta}}}{(\beta-1)\, x_N}\right)^{\frac{2-\beta}{\beta-1}} + c & \hbox{ if } \beta<2\, ,\\[4ex]
\displaystyle -F( e_N \otimes e_N) \log x_N + c & \hbox{ if } \beta=2\, .
 \end{array}\right.      $$
        \end{theo}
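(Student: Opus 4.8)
\emph{Step 1 (structure and the one--dimensional reduction).} Throughout write $\lambda:=F(e_N\otimes e_N)>0$ and let $U=U(x_N)$ denote the explicit profile of the statement with the free constant set equal to $0$; note that $U$ is decreasing and strictly convex, blows up as $x_N\to0^+$, and (for $\beta<2$) stays bounded as $x_N\to+\infty$. I would first record that, since only $\nabla u$ and $D^2u$ enter \eqref{eqergo}, the equation is invariant both under addition of constants and under horizontal translations $u\mapsto u(\cdot+h',x_N)$; in particular the whole family $\{U+c:c\in\R\}$ solves \eqref{eqergo}, so the conclusion can only hold up to an additive constant. Next, for a function of $x_N$ alone that is convex in $x_N$ one has $D^2u=u''\,e_N\otimes e_N\ge O$, whence $F(D^2u)=\lambda\,u''$ by $1$--homogeneity, and \eqref{eqergo} becomes the autonomous ODE $-\lambda u''+|u'|^\beta=0$. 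Setting $w=-u'>0$ this reads $w'=-w^\beta/\lambda$, and the requirement $w\to+\infty$ at $x_N=0$ forced by the blow up selects exactly one solution, giving the stated profile for $\beta<2$ and its logarithmic limit for $\beta=2$. Thus $U$ is a genuine solution, and the whole theorem reduces to the rigidity statement that \emph{$u$ does not depend on $x'$}.

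\medskip

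\emph{Step 2 (global sandwiching and boundary oscillation).} The plan is to trap $u$ between two vertical translates of $U$,
\[
U(x_N)+c_1\ \le\ u(x',x_N)\ \le\ U(x_N)+c_2\qquad\text{in }\R^N_+ ,
\]
and, more precisely, to show that the horizontal oscillation $\osc_{x'}u(\cdot,x_N)$ tends to $0$ both as $x_N\to0^+$ and as $x_N\to+\infty$. For this I would use explicit barriers of the form $A\,x_N^{-(2-\beta)/(\beta-1)}$ (resp. $-A\log x_N$ if $\beta=2$): substituting such a function into $-\lambda(\cdot)''+|(\cdot)'|^\beta$ shows it is a subsolution for $A$ small and a supersolution for $A$ large, the two thresholds meeting precisely at the coefficient of $U$. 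Comparing $u$ with these barriers on truncated cylinders $\{|x'|<R,\ \eta<x_N<T\}$ and letting $\eta\to0^+$, $R,T\to+\infty$ — using \textbf{(H1)} to force blow up at least at the subsolution rate, \textbf{(H2)} together with the comparison principle provided by \eqref{ue} (valid here since $\beta\le2$ keeps the gradient term of at most quadratic growth) to control the lateral and far boundaries, and the boundedness of $U$ at infinity when $\beta<2$ — yields the sandwiching with $c_1,c_2$ uniform in $x'$. Refining the barriers near the two degenerate ends then gives the decay of $\osc_{x'}u(\cdot,x_N)$ there.

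\medskip

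\emph{Step 3 (killing the horizontal increments).} For $t\in\R$ and a horizontal unit vector $e'$ set $w_t(x):=u(x'+te',x_N)-u(x',x_N)$. Since $u(\cdot+te',\cdot)$ and $u$ both solve \eqref{eqergo}, the uniform ellipticity \eqref{ue} (which controls $F(D^2u(\cdot+te'))-F(D^2u)$ by the Pucci extremal operators of the Hessian difference) and the fact that $t\mapsto|t|^\beta$ is $C^1$ for $\beta>1$ (which turns $|\nabla u(\cdot+te')|^\beta-|\nabla u|^\beta$ into a drift term) show that $w_t$ satisfies, in the viscosity sense, the two Pucci inequalities $\mathcal M^+_{a,A}(D^2w_t)+b(x)|\nabla w_t|\ge0$ and $\mathcal M^-_{a,A}(D^2w_t)-b(x)|\nabla w_t|\le0$, with $b$ bounded on every strip $\{\delta\le x_N\le1/\delta\}$. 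By Step 2, $w_t$ is globally bounded and $w_t\to0$ as $x_N\to0^+$ and as $x_N\to+\infty$. I would then conclude $w_t\equiv0$ via the strong maximum principle combined with horizontal translation compactness: if $\sup w_t>0$ were approached along $x_n$ with $|x_n'|\to\infty$ and $x_{n,N}$ in a compact subset of $(0,\infty)$ (the only possibility left by the decay at the ends), the translates $w_t(\cdot+x_n',\cdot)$, equibounded and — by the interior $C^{1,\alpha}$ estimates for uniformly elliptic equations with at most quadratic gradient growth — precompact in $C^1_{\mathrm{loc}}(\R^N_+)$, would converge to a limit attaining a positive interior maximum; being a subsolution of $\mathcal M^+_{a,A}(D^2\,\cdot)+b|\nabla\cdot|\ge0$, it must then be the constant $\sup w_t>0$, contradicting its vanishing as $x_N\to0^+$. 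Hence $\sup w_t\le0$, and symmetrically $\inf w_t\ge0$, so $w_t\equiv0$. As $t$ and $e'$ are arbitrary, $u=u(x_N)$, and Step 1 identifies $u$ with the announced profile up to the constant $c$.

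\medskip

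\emph{Main obstacle.} The delicate points are concentrated at the degenerate boundary $x_N\to0^+$, where the gradient — hence the drift $b$ of Step 3 — blows up, so that the barrier analysis of Step 2 must be sharp enough to yield not just the sandwiching but the genuine \emph{decay} of the horizontal oscillation; and at horizontal infinity, where the extrema of $w_t$ may fail to be attained and must be recovered through translation invariance, interior regularity and the strong maximum principle. I expect these two issues — rather than the comparison and ODE computations — to be the heart of the proof.
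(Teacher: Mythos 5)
Your Step 1 is sound and coincides with the paper's final step, but the core of your argument (Steps 2--3) has a genuine gap, located exactly where you yourself point: the behavior as $x_N\to 0^+$. Step 3 cannot run without the decay $w_t\to 0$ as $x_N\to 0^+$, uniformly in $x'$: a positive constant satisfies $\mathcal M^+_{a,A}(D^2\,\cdot)+b|\nabla \cdot|\ge 0$, so mere boundedness of $w_t$ near $\{x_N=0\}$ forces nothing, and the strong maximum principle argument collapses. And this decay is not delivered by Step 2. Barriers of the form $A\,x_N^{-(2-\beta)/(\beta-1)}$ combined with comparison can at best yield the two-sided bound $U+c_1\le u\le U+c_2$, i.e.\ \emph{bounded} horizontal oscillation with a gap $c_2-c_1$ that nothing makes small: the equation in \eqref{eqergo} is invariant under $u\mapsto u+\mathrm{const}$, so comparison against any fixed profile can only localize $u$ up to an additive constant and contains no mechanism forcing $\osc_{x'}u(\cdot,x_N)\to 0$. (This is precisely why the multiplicative $(1+\epsilon)$-scaling tricks used for blow-up problems with zero-order nonlinearities are unavailable here.) The claim ``refining the barriers gives the decay'' is therefore essentially the rigidity statement you are trying to prove, assumed rather than established. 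Two further problems: the decay of $w_t$ as $x_N\to+\infty$ also does not follow from sandwiching (again only bounded oscillation); what gives it, for fixed $t$, is the gradient estimate $|Du(x)|\le C x_N^{-1/(\beta-1)}$, estimate \eqref{Lip} of the paper, which you never invoke. And your truncated-cylinder comparison leaves the lateral boundary $\{|x'|=R\}$ uncontrolled as $R\to\infty$; comparison in unbounded strips is not free, and is exactly the content of Proposition \ref{strong}, which the paper proves by horizontal translation compactness plus the strong comparison principle.

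The paper's proof sidesteps the obstruction you cannot overcome by sliding \emph{obliquely downward} rather than horizontally: for $\nu=(\nu',\nu_N)$ with $\nu_N<0$ and $t>0$, the translate $u_t=u(\cdot+t\nu)$ blows up on $\{x_N=-t\nu_N\}$, which is an interior level of the original half-space, so (H1) applied to $u_t$ and (H2) applied to $u$ give $u-u_t\to-\infty$ there \emph{with no rate whatsoever}, while \eqref{Lip} gives $u-u_t\to 0$ as $x_N\to+\infty$; Proposition \ref{strong} in the intermediate strip then yields $u\le u_t$ for every such $\nu$ and every $t>0$. This gives $\partial u/\partial\nu\ge 0$ for all downward directions, and the horizontal independence you were after follows for free by letting $\nu_N\to 0^-$ and using the continuity of $Du$. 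If you want to salvage your scheme, replace your horizontal increments by these oblique ones; in that form your Steps 2--3 reduce to the paper's argument, and no asymptotics near $x_N=0$ beyond (H1)--(H2) are needed.
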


We observe that the logarithmic change of variable useful in the  case of Laplace operator cannot be used for a fully nonlinear operator $F$, even when $\beta=2$, since the new variable $v$ will be merely a subsolution and not a solution  of the homogeneous equation.
Theorem \ref{liouville} will be instead established by showing first that any viscosity solution $u$ actually is a monotone decreasing function of the only variable $x_N$, and then integrating the resulting ODE. The one dimensional symmetry of $u$ will be obtained in turn  by applying the well known so called sliding method, introduced in \cite{BHM}. Indeed, after showing that the comparison principle holds true for bounded viscosity sub- and  supersolutions in horizontal strips, any solution $u$ will be proved to satisfy the inequality
\begin{equation}\label{comp}
u\leq u_t\, ,
\end{equation}
where $u_t(x)=u(x+t\nu )$ stands for the translated function with respect to any direction $\nu=(\nu',\nu_N)$ with $\nu_N \leq 0$ and $t>0$. We emphasize that the proof  for Theorem \ref{liouville} is even more simple than the original proof given in \cite{BHM}, where inequality \eqref{comp} is first established for $t$ large, and then for every $t>0$ by a contradiction argument. In our case, since the comparison principle for solutions in horizontal strips hold true indipendently of the size of the solutions to be compared, inequality \eqref{comp} is established simultaneously for all $t>0$.

Theorem \ref{liouville} belongs to the large family of symmetry results for solutions of nonlinear elliptic equations. Besides \cite{BHM}, we just mention the works \cite{FPS, FMRS, BD}, and refer to the references therein, as recent symmetry results for solutions of semilinear, quasilinear and fully nonlinear equations respectively. In all previous results, the symmetry  is obtained for entire solutions or solutions vanishing on the boundary. Up to our knowledge, Theorem \ref{liouville} is the first application of the sliding method to infinite boundary value problems.

An interesting feature of the Liouville property for problem \eqref{eqergo} relies in its connection with the so called ergodic problem associated with fully nonlinear degenerate/singular operators. Let us recall that, given a $\mathcal{C}^2$ bounded domain $\Omega\in R^N$  and a continuous function
 $f\in \mathcal{C}(\Omega)$,   an ergodic constant related to $\Omega$ and $f$ is a constant $c_\Omega\in \R$  such that there exist  solutions $u\in \mathcal{C}(\Omega)$ (called ergodic functions) of the infinite boundary value problem 
\begin{equation}\label{ergodicO}
\left\{
\begin{array}{cl}
-|\nabla u|^\alpha F(D^2u)+|\nabla u|^\beta = f+ c_\Omega & \hbox{ in } \Omega\, ,\\[2ex]
u=+\infty & \hbox{ on } \partial \Omega\, ,
\end{array}
\right.
\end{equation}
where $F$ is a positively homogeneous   operator satisfying \eqref{ue} as before, and the exponents $\alpha$ and $\beta$ satisfy respectively $\alpha>-1$ and $\alpha+1< \beta \leq \alpha+2$.

Problem \eqref{ergodicO} has been originally studied  in the semilinear case $\alpha=0$ and $F(D^2u)=\Delta u$ in \cite{LL}, where the terminology "ergodic" has been introduced and its connection with a state constraint optimal stochastic control problem has been showed. Further contributions have been given in \cite{P}, and in \cite{LP}  for $p$-Laplace operator as principal part. 

In the fully nonlinear degenerate/singular setting, 
problem \eqref{ergodicO} has been recently studied in \cite{BDL1}, where it has been proved in particular that if $f$ is bounded and Lipschitz continuous, then  ergodic pairs $(c_\Omega, u)$ do exist. 
By assuming further that 
\begin{equation}\label{smoothFd}
F(\nabla d(x) \otimes \nabla d(x)) \quad \hbox{ is of class $\mathcal{C}^2$ for $x$ in a neighborhood of $\partial \Omega$,}
\end{equation}
where $d(x)$ denotes the distance function from $\partial \Omega$,  in \cite{BDL1} the ergodic constant $c_\Omega$ is proved to be unique and any ergodic function $u$ is shown to satisfy the asymptotic identities
\begin{equation}\label{asym1}
\lim_{d(x)\to 0} \frac{u(x)\, d(x)^\chi}{C(x)}=1  \mbox{ if}\  \chi>0, 
\end{equation}
and 
\begin{equation}\label{asym2}
 \lim_{d(x)\to 0} \frac{u(x)}{|\log d(x)|\, C(x)}=1   \hbox{ if}\  \chi=0\, ,
\end{equation}
where
$$\chi = \frac{ 2+ \alpha-\beta }{ \beta-1-\alpha }\, ,$$
and $C(x)$ is any $\mathcal{C}^2(\Omega)$ function satisfying, for $x$ in a neighborhood of $\partial \Omega$,    
\begin{equation}\label{C(x)}
\begin{array}{ccc}C(x)=\left((\chi+1)F(\grad d(x)\otimes\grad d(x))\right)^{\frac{1}{ \beta-\alpha-1}}\chi^{-1}\  &\mbox{if }& \chi>0,\\[1ex]
 C(x)=F(\grad d(x)\otimes\grad d(x))\quad & \mbox{if }& \chi=0.
 \end{array}
\end{equation}
Moreover,  in \cite{BDL2} the ergodic functions are proved to be $C^{1,\gamma}_{\rm loc}(\Omega)$ and to satisfy, in the case $\chi>0$ (i.e. $\beta<\alpha+2$),  the gradient boundary asymptotics 
\begin{equation}\label{gradbbu}
\lim_{d(x)\to 0}   \frac{d(x)^{\chi+1} \nabla u(x)\cdot \nabla d(x)}{C(x)}=
 -\chi \, .
\end{equation}
As a consequence, the uniqueness of the ergodic function is proved in this case, see Theorem 1.2 of \cite{BDL2}.

By keeping the same terminology for unbounded domains, Theorem \ref{liouville} states that the ergodic constant for $\Omega=\R^N_+$ and $f\equiv 0$ is $c=0$, and the ergodic function is unique and coincides with the one variable function $u(x_N)$ given in the statement of Theorem \ref{liouville}. Indeed, as proved in \cite{BDNodea, IS}, one has
$$
-|\nabla u|^\alpha F(D^2u)+ |\nabla u|^\beta =0 \ 
\Longleftrightarrow \ 
-F(D^2u)+|\nabla u|^{\beta-\alpha}=0
$$
Note that, in the present assumptions, one has $1<\beta-\alpha\leq 2$, and thus Theorem \ref{liouville} applies.
More than that, by  standard scaling arguments, see e.g. \cite{KSZ, LP}, Theorem \ref{liouville} may be used in the blow up analysis near the boundary for any ergodic function,  yielding gradient boundary blow up rates for ergodic functions in any bounded domain.

Therefore, we deduce from Theorem \ref{liouville}   asymptotic identities analogous to \eqref{gradbbu} also for the case $\beta=\alpha+2$, i.e. $\chi=0$. These imply in particular that the gradient of any ergodic function does not vanish near the boundary and, as a consequence, the strong maximum principle applies, see \cite{BDNodea}, and yields the uniqueness of the ergodic function.

The paper is organised as follows: in Section 2 we give the proofs of the comparison principle in strips and of Theorem \ref{liouville}; in Section 3 we prove identity \eqref{gradbbu} for the case $\chi=0$ and deduce the uniqueness of the ergodic function, referring to \cite{BDL3} for the proof.

\section{Proof of Theorem \ref{liouville}}        
 Theorem \ref{liouville} will be obtained as an easy consequence of the following comparison result in strips.
 
              \begin{prop}\label{strong}
                                      Suppose that $u$ and $v$ are bounded and Lipschitz continuous functions on the strip $\Sigma =\R^{N-1} \times (b, c)$, satisfying in the viscosity sense
                                      $$ -F( D^2 u) + | \nabla u |^\beta \leq  0\leq  -F( D^2 v) + | \nabla v |^\beta\qquad \hbox{ in } \Sigma\, . $$
                                      Then 
                                       $$ u(x) \leq v(x)+ \sup_{\partial \Sigma}   (u-v)\, , \qquad \forall\, x\in \Sigma\, .$$
                                       \end{prop}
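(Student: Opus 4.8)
\emph{Strategy.} The plan is to establish the inequality by the classical doubling of variables for viscosity solutions, adapted to the present setting by two further devices: a penalization at infinity, needed because $\Sigma$ is unbounded in the horizontal variable $x'$, and a preliminary reduction of $v$ to a \emph{strict} supersolution, needed because the operator $-F(D^2\cdot)+|\nabla\cdot|^\beta$ carries no zeroth order term and is therefore invariant under the addition of constants. This last feature is the main obstacle: at an interior maximum the plain doubling argument only produces the trivial inequality $0\le 0$, so some strictness must be manufactured beforehand.

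\emph{A strict supersolution.} Exploiting that $\Sigma$ is bounded in the $x_N$-direction, I would set, for small $\sigma>0$ and a large constant $K$ to be fixed,
\[
v_\sigma(x)=v(x)-\frac{\sigma}{K}\,e^{K x_N}\,.
\]
If a test function $\phi$ touches $v_\sigma$ from below at $x_0$, then $\phi+\frac{\sigma}{K}e^{Kx_N}$ touches $v$ from below there, and the supersolution inequality for $v$ applies. The Hessian of the added term, $-\sigma K\,e^{Kx_N}\,e_N\otimes e_N$, is negative semidefinite, so by the left inequality in \eqref{ue} it decreases $F$ by at least $a\,\sigma K\,e^{Kx_N}$; meanwhile, since $v$ is globally Lipschitz, the perturbation of the gradient term is bounded by $C\,\sigma e^{Kx_N}$, with $C$ depending only on $\beta>1$ and the Lipschitz constant. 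Choosing $K>C/a$ gives
\[
-F(D^2 v_\sigma)+|\nabla v_\sigma|^\beta\ \ge\ \delta_\sigma:=\sigma\,e^{Kb}(aK-C)>0 \qquad\hbox{in }\Sigma\,,
\]
so $v_\sigma$ is a strict supersolution and $v_\sigma\to v$ uniformly as $\sigma\to0$.

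\emph{Doubling with penalization.} Fix $\sigma$, write $M_\sigma=\sup_{\partial\Sigma}(u-v_\sigma)$, and argue by contradiction assuming $\sup_\Sigma(u-v_\sigma)>M_\sigma$. I would maximize over $\bar\Sigma\times\bar\Sigma$ the function
\[
\Phi_{\epsilon,\eta}(x,y)=u(x)-v_\sigma(y)-\frac{|x-y|^2}{2\epsilon}-\eta\big(\langle x'\rangle+\langle y'\rangle\big)\,,\qquad \langle z'\rangle=\sqrt{1+|z'|^2}\,.
\]
Because $u$ and $v_\sigma$ are bounded while the penalization tends to $-\infty$ as $|x'|,|y'|\to\infty$, the maximum is attained at a point $(\bar x,\bar y)$ confined to a compact set, and the usual estimates give $|\bar x-\bar y|/\epsilon$ bounded by the Lipschitz constant of $u$ and $|\bar x-\bar y|\to0$ as $\epsilon\to0$. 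If $\bar x$ or $\bar y$ lies on $\partial\Sigma$ along a sequence $\epsilon\to0$, then in the limit the value of $\Phi_{\epsilon,\eta}$ does not exceed $M_\sigma$; but, for $\eta$ small, this value stays strictly above $M_\sigma$ by the contradiction hypothesis, which is absurd. If instead both points are interior, the Crandall--Ishii lemma on sums yields $X,Y\in{\mathcal S}_N$ to be inserted in the two viscosity inequalities; subtracting them and testing the associated matrix inequality on the diagonal (which kills the singular $\tfrac1\epsilon$ block and leaves only the $\eta$-Hessians of $\langle\cdot\rangle$), the uniform ellipticity \eqref{ue} gives $F(X)-F(Y)\le C\eta$, while the two gradient slots are bounded and differ by $O(\eta)$, so their $\beta$-powers differ by $O(\eta)$ as well. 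Hence $\delta_\sigma\le C'\eta$, which is impossible as soon as $\eta$ is chosen small in terms of the fixed $\delta_\sigma$. Thus only the boundary alternative can occur, and the contradiction is reached.

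\emph{Conclusion.} Both alternatives being excluded, $u\le v_\sigma+M_\sigma$ holds on $\Sigma$ for every $\sigma>0$, and letting $\sigma\to0$, so that $v_\sigma\to v$ and $M_\sigma\to\sup_{\partial\Sigma}(u-v)$, gives the asserted inequality. I expect the most delicate point to be the uniform bookkeeping of the error terms in the doubling: one has to verify that every correction produced by the penalization and by the theorem on sums is genuinely $O(\eta)$, uniformly in $\epsilon$, which is exactly where the Lipschitz hypothesis enters, through the boundedness of $(\bar x-\bar y)/\epsilon$ and the local Lipschitz continuity of $t\mapsto|t|^\beta$, so that the fixed strictness $\delta_\sigma$ can absorb them.
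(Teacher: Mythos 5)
Your proof is correct, but it follows a genuinely different route from the paper's. The paper argues by contradiction and compactness: assuming $S=\sup_\Sigma(u-v)>\sup_{\partial\Sigma}(u-v)$, it sets $w=v+S$, picks a sequence $x_j$ along which $(u-w)(x_j)\to 0$, and translates both functions horizontally by $x'_j$; the Lipschitz and boundedness hypotheses give, via Ascoli--Arzel\`a and the stability of viscosity inequalities under local uniform convergence, limit functions $\bar u\leq \bar w$ that are sub- and supersolution, touch at an \emph{interior} point of the strip, and are strictly ordered on $\partial\Sigma$; the strong comparison principle then forces $\bar u\equiv\bar w$, a contradiction. You instead run the classical Crandall--Ishii scheme: the exponential perturbation $v_\sigma=v-\frac{\sigma}{K}e^{Kx_N}$ (uniformly small precisely because the strip has finite width) turns $v$ into a strict supersolution, curing the invariance of the operator under addition of constants, which you correctly identify as the main obstacle; doubling of variables with the $\sqrt{1+|x'|^2}$-penalization handles the unboundedness in $x'$, and the theorem on sums, tested on diagonal vectors $(\xi,\xi)$, kills the singular $\tfrac1\epsilon$ blocks (including those in the quadratic correction term) and yields $F(X)-F(Y)\le C\eta$, together with an $O(\eta)$ discrepancy between the two gradient slots, contradicting the fixed strictness $\delta_\sigma$ for $\eta$ small. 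Your quantifier order is sound ($\sigma$ fixed first, then $\eta$ small against both $\delta_\sigma$ and the contradiction hypothesis, then $\epsilon\to 0$), and the bookkeeping you flag as delicate does work out, since the Lipschitz hypothesis bounds $|\bar x-\bar y|/\epsilon$ and hence both gradient slots, so that $t\mapsto|t|^\beta$ is only ever evaluated on a fixed compact set. As for what each approach buys: yours stays entirely within the standard viscosity toolbox and never needs a strong maximum principle, which the paper invokes as a black box and which is itself a nontrivial ingredient for fully nonlinear equations with Hamiltonian terms; the paper's proof is shorter and softer, trading all the doubling bookkeeping for compactness of translates (made possible by the very same Lipschitz hypothesis) plus that strong comparison principle. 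Note finally that both arguments exploit the finite width of the strip in an essential way: you through the boundedness of $e^{Kx_N}$ on $(b,c)$, the paper through the compactness of $[b,c]$ for the vertical coordinates of the maximizing sequence.
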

\begin{proof}                                                                         
                                        Suppose by contradiction that 
                                        $$ S\, : =\sup_{\Sigma} (u-v) > \sup_{\partial \Sigma} ( u-v)\, ,$$
 and let us set $ w(x)= v(x)+ S$.  Then, one has $u(x)\leq w(x)$ for all $x\in \Sigma$ and there exists a sequence  $\left\{x_j=\left(x'_j, (x_N)_j\right)\right\}\subset \Sigma$ such that 
                                    $ (u-w)(x_j) \rightarrow 0$. Up to a (not relabeled) subsequence, we have  $(x_N)_j \rightarrow \bar x_N \in [ b, c]$. 
                                    
We define $u_j( x^\prime, x_N) = u( x^\prime + x^\prime_j, x_N)$ and 
                                     $ w_j( x^\prime, x_N) = w( x^\prime + x^\prime_j, x_N)$ . 
                                      By Ascoli-Arzel\`a\ Theorem, again up to a subsequence,  $u_j$ and $w_j$ converge locally uniformly  to some $\bar u$ and $\bar w$, which  are  sub and supersolution respectively. 
                                       Furthermore, 
                                        $$\bar u ( 0, \bar x_N) = \lim_{j\to \infty} u \left( x^\prime _j, (x_N)_j\right) = \lim_{j\to \infty} w\left( x^\prime _j, (x_N)_j\right) = \bar w( 0, \bar x_N)\, .$$
                                         Since                                         $\bar u \leq \bar w$ in $\Sigma$, $\bar u\leq \bar w +\sup_{\partial \Sigma} (u-v)-S<\bar w$ on $\partial \Sigma$  and $\bar u (0, \bar x_N)=\bar w (0, \bar x_N)$, we deduce that $\bar x_N\in (b,c)$ and, by the strong comparison principle, that
                                           $\bar u \equiv \bar w$  in $\overline{\Sigma}$, which yields a contradiction for  either $x_N =b$ or $x_N = c$.  
\end{proof}

\bigskip

\noindent \emph{ Proof of Theorem \ref{liouville}.}
         
The conclusion will be obtained by showing that $u$ is a monotone decreasing function depending on the only variable $x_N$.

Let us first observe that any solution $u$ of the equation in \eqref{eqergo} is locally $C^{1,\gamma}$ in $\R^N_+$ for some $\gamma\in (0,1)$ depending on $N, a, A$  and $\beta$, see e.g. \cite{BDL3}. Moreover, as proved e.g. in \cite{CDLP} and \cite{BDL1}, $u$ satisfies the local Lipschitz estimate
\begin{equation}\label{Lip}
|Du(x)|\leq \frac{C}{x_N^{\frac{1}{\beta-1}}}\, ,
\end{equation}
for a positive constant $C$ depending only on $N, a, A$ and $\beta>1$.

 Next, for a fixed unitary vector $\nu=(\nu',\nu_N)\in \R^N$ with $\nu_N<0$, and for $t>0$,  let us consider  the function   
 $$
 u_t(x)\, := u(x+t\, \nu)\, ,
 $$
 defined in  $\R^N_t\, :=\{ (x',x_N)\in \R^N\ :\ x_N>-t\, \nu_N\}$. Clearly, $u_t$ is a solution of the infinite boundary value problem \eqref{eqergo} in $\R^N_t$.
 
 We claim that $u\leq u_t$ in $\R^N_t$. 
 
 Indeed, by assumptions (H1) and (H2),  we have that, uniformly with respect to $x'\in \R^{N-1}$,
 $$
 u(x)-u_t(x)\to -\infty \ \hbox{ as } x_N\to -t\, \nu_N\, .
 $$
 Moreover, by estimate \eqref{Lip}, we also have
 $$
 u(x)-u_t(x)\to 0 \ \hbox{ as } x_N\to +\infty\, ,
 $$
  uniformly with respect to $x'\in \R^{N-1}$ as well. Therefore, for any $\epsilon>0$ there exists $\delta=\delta_{\epsilon, t}>0$ sufficiently small such that
 $$
 u(x)-u_t(x)\leq \epsilon \ \hbox{ for either } x_N\geq -t\, \nu_N+\frac{1}{\delta} \ \hbox{ or } x_N\leq -t\, \nu_N+\delta\, .
 $$
On the other hand, in the strip $\Sigma=\R^{N-1}\times \left( -t\, \nu_N+\delta, -t\, \nu_N+\frac{1}{\delta}\right)$,  $u$ and $u_t$ satisfy the assumptions of Proposition \ref{strong}, so that we obtain
$$
u(x)-u_t(x)\leq \epsilon \qquad \forall\, x\in \R^N_t\, .
$$
By letting $\epsilon\to 0$, we deduce the claim. 
 
 It then follows that $\frac{\partial u}{\partial \nu}\geq 0$ in $\R^N_+$ for all vectors $\nu=(\nu', \nu_N)$ with $\nu_N<0$.  In particular $\frac{\partial u}{\partial x_N}\leq 0$. Furthermore, 
 by the continuity of $Du$,  $\frac{\partial u}{\partial \nu}\geq 0$ is true also for vectors $\nu$ such that $\nu_N=0$. Hence, we obtain that $\frac{\partial u}{\partial x_i}=0$ for all $i=1,\ldots, N-1$  
 Therefore, $u(x)\equiv u(x_N)$ is a  nonincreasing  one variable  function satisfying in the viscosity sense
 $$
 \left\{
 \begin{array}{cl}
 F(u'' e_N\otimes e_N)=(-u')^\beta & \hbox{ for } x_N>0\\[2ex]
 u(0)=+\infty & 
 \end{array}
 \right.
 $$
This implies  that necessarily $u''\geq 0$, so that $u$ is a solution of the ODE
$$
u''=\frac{(-u')^\beta}{ F(e_N\otimes e_N)}\, .
$$
A direct integration of this equation and the imposition of the initial condition yield the result.
\hfill$\Box$

 \bigskip

                                        \section{Applications :  gradient boundary asymptotic behavior  and uniqueness of the ergodic functions}  
           
In this section we consider the ergodic problems in bounded domains associated with degenerate or singular fully nonlinear operators. Namely, we consider the infinite boundary value problem \eqref{ergodicO} for a smooth bounded domain $\Omega\subset \R^N$, and we refer to a solution $(c_\Omega ,u)$ as an ergodic pair. As recalled in the introduction, by assuming that $f\in C(\Omega)$ is bounded and Lipschitz continuous, $F$ is positively homogeneous and satisfies \eqref{ue}, $\alpha>-1$ and $\alpha+1<\beta\leq \alpha+2$, it is proved in \cite{BDL1} the existence of ergodic pairs $(c_\Omega ,u)$. Moreover, by further assuming the smoothness condition \eqref{smoothFd}, the ergodic constant $c_\Omega$ is unique and any ergodic function $u$ satisfies the asymptotic identities \eqref{asym1} and \eqref{asym2}, see \cite{BDL1}. Furthermore, in the case $\beta<\alpha+2$ any ergodic function is proved to satisfy also \eqref{gradbbu} and, consequently, to be unique, see \cite{BDL2}.

Here we are concerned  with the analogous results for the case $\beta=\alpha+2$, i.e. $\chi=0$.
  
\begin{theo}\label{asymgrad} Assume that $F$ satisfies \eqref{ue} and \eqref{smoothFd},  let $f\in {\mathcal C}(\Omega)$ be  bounded and let  $u\in {\mathcal C}(\Omega)$ be a solution of problem \eqref{ergodicO} with $\beta=\alpha+2$. Then, one has
 \begin{equation}\label{asymgrad1}
 \lim_{d(x)\to 0}   \frac{d(x) \nabla u(x)\cdot \nabla d(x)}{C(x)}=
 -1 \, .
 \end{equation}
\end{theo}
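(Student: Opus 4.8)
The plan is to prove \eqref{asymgrad1} by a blow-up argument at the boundary, classifying the limiting profile by means of Theorem \ref{liouville}. Fix an arbitrary sequence $(x_n)\subset\Omega$ with $d_n:=d(x_n)\to 0$, and let $\bar x_n\in\partial\Omega$ be a nearest boundary point, so that $|x_n-\bar x_n|=d_n$. Up to a subsequence, $\bar x_n\to x_0\in\partial\Omega$ and $\nabla d(x_n)\to\nu:=\nabla d(x_0)$, the interior unit normal at $x_0$. I would rescale at scale $d_n$ by setting
$$v_n(y):=u(x_n+d_n\,y)-u(x_n)\, ,\qquad y\in\Omega_n:=\{y:\ x_n+d_n y\in\Omega\}\, .$$
Since $\Omega$ is of class $\mathcal C^2$ and $d\in\mathcal C^2$ near $\partial\Omega$, the sets $\Omega_n$ converge to the half-space $H=\{y:\ y\cdot\nu>-1\}$ and $d_n^{-1}d(x_n+d_n y)\to y\cdot\nu+1$ locally uniformly. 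Substituting $\nabla u=d_n^{-1}\nabla v_n$ and $D^2u=d_n^{-2}D^2v_n$ into \eqref{ergodicO}, using the $1$-homogeneity of $F$ and $\beta=\alpha+2$, I obtain that $v_n$ solves, in the viscosity sense,
$$-|\nabla v_n|^\alpha F(D^2v_n)+|\nabla v_n|^\beta=d_n^{\alpha+2}\big(f(x_n+d_n y)+c_\Omega\big)\ \hbox{ in }\ \Omega_n\, ,$$
whose right-hand side tends to $0$ locally uniformly, because $f$ is bounded and $\alpha+2>0$.

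Next I would pass to the limit. By the interior $\mathcal C^{1,\gamma}_{\rm loc}$ estimates of \cite{BDL3} together with the gradient estimate $|\nabla u|\le C\,d(\cdot)^{-1}$ (the analogue of \eqref{Lip} for \eqref{ergodicO} with $\chi=0$, see \cite{BDL1,BDL2}), the family $(v_n)$ is bounded in $\mathcal C^{1,\gamma}_{\rm loc}(H)$; hence, up to a subsequence, $v_n\to v_\infty$ in $\mathcal C^1_{\rm loc}(H)$. By stability of viscosity solutions the limit solves the homogeneous equation, equivalently (by \cite{BDNodea,IS}) $-F(D^2v_\infty)+|\nabla v_\infty|^{2}=0$ in $H$, since $\beta-\alpha=2$. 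To identify $v_\infty$ I would use the boundary asymptotics \eqref{asym2}: being uniform along $\partial\Omega$ by compactness, after rescaling it gives $v_n(0)=0$, $v_n\to+\infty$ uniformly as $y$ approaches $\partial H$, and $v_n$ locally bounded away from $\partial H$, so that $v_\infty$ satisfies the infinite boundary condition on $\partial H$ together with {\rm (H1)} and {\rm (H2)}. After a rotation $R$ sending $e_N$ to $\nu$, which replaces $F$ by $\tilde F(X)=F(RXR^t)$ (keeping the equation of the same form, with $\tilde F(e_N\otimes e_N)=F(\nu\otimes\nu)$), Theorem \ref{liouville} in the case $\beta-\alpha=2$ applies and yields
$$v_\infty(y)=-F(\nu\otimes\nu)\,\log(y\cdot\nu+1)\, ,$$
the additive constant being $0$ since $v_\infty(0)=0$.

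Finally I would read off the gradient. Differentiating, $\nabla v_\infty(0)=-F(\nu\otimes\nu)\,\nu$, and the $\mathcal C^1_{\rm loc}$ convergence gives $d_n\,\nabla u(x_n)=\nabla v_n(0)\to -F(\nu\otimes\nu)\,\nu=-C(x_0)\,\nu$, where $C(x_0)=F(\nu\otimes\nu)$ according to \eqref{C(x)} for $\chi=0$. Since $\nabla d(x_n)\to\nu$ and $C(x_n)\to C(x_0)$, this yields
$$\frac{d_n\,\nabla u(x_n)\cdot\nabla d(x_n)}{C(x_n)}\longrightarrow \frac{-C(x_0)\,\nu\cdot\nu}{C(x_0)}=-1\, .$$
As every sequence with $d_n\to 0$ admits a subsequence along which this holds, the full limit \eqref{asymgrad1} follows.

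The main obstacle, I expect, is concentrated in the passage to the limit and is twofold. First, one needs compactness strong enough for $\mathcal C^1$ (not merely uniform) convergence, which is essential since the conclusion concerns $\nabla u$ and relies on the scale-invariance of the $\mathcal C^{1,\gamma}_{\rm loc}$ bounds. Second, one must verify that the limit genuinely inherits the \emph{uniform} infinite boundary condition {\rm (H1)}, so that Theorem \ref{liouville} is applicable; this requires controlling the boundary blow-up of $u$ uniformly in the tangential directions as $x_n\to x_0$, for which I would rely on \eqref{asym2} together with barriers near $\partial\Omega$. The degeneracy of the operator where $\nabla u$ might vanish is not an issue here, precisely because the limiting profile has non-vanishing gradient near $y=0$.
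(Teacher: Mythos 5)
Your blow-up scheme is structurally the same as the paper's proof: rescale near the boundary, obtain compactness from the scale-invariant $\mathcal C^{1,\gamma}_{\rm loc}$ estimates, pass to the limit by stability, reduce to $-F(D^2v)+|\nabla v|^2=0$ in a half-space via \cite{IS}, classify the limit with Theorem \ref{liouville} after a rotation, and read off the gradient from the $\mathcal C^1_{\rm loc}$ convergence. Centering at the interior points $x_n$ at scale $d_n$ with the normalization $v_n(y)=u(x_n+d_n y)-u(x_n)$, instead of the paper's centering at $x_0\in\partial\Omega$ with the additive constant $C(x_0)\log\delta$, is a harmless variant; it even makes local boundedness immediate (from $v_n(0)=0$ and the rescaled version of the gradient bound $|\nabla u|\le C/d$), and your sequential formulation gives the full limit without the paper's appeal to uniformity in $x_0\in\partial\Omega$.

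There is, however, a genuine gap exactly at the step you single out as the main obstacle: you propose to verify the infinite boundary condition and hypotheses (H1)--(H2) for the limit $v_\infty$ using the asymptotics \eqref{asym2}. That estimate is too weak for this purpose. Indeed, \eqref{asym2} only controls the ratio $u/(-C|\log d|)$, i.e. it gives $u(x)=-C(x)\log d(x)+E(x)$ with $E(x)=o(|\log d(x)|)$; after your rescaling the relevant quantity is the difference $E(x_n+d_n y)-E(x_n)$, which is merely $o(|\log d_n|)$ and may diverge as $d_n\to 0$ (an error of size $\sqrt{|\log d|}$, oscillating in the tangential variables, is compatible with \eqref{asym2} but makes $v_n$ drift by amounts of order $\sqrt{|\log d_n|}$). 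Hence from \eqref{asym2} alone you can conclude neither that $v_\infty$ blows up on $\partial H$, nor that it satisfies (H1) uniformly, nor that it is bounded on slabs uniformly in the tangential directions as required by (H2) — and note that the rescaled gradient bound cannot supply (H2) either, since integrating it along tangential directions gives a bound growing linearly in $|y'|$. The tool that closes this gap, and the one the paper uses, is the global two-sided estimate \eqref{esti1} from Theorem 6.3 of \cite{BDL1}: $|u(x)+C(x)\log d(x)|\le D$ for all $x\in\Omega$. With it, your normalization works verbatim: using the Lipschitz continuity of $C(\cdot)$ one gets $\bigl|v_n(y)+C(x_n)\log\bigl(d(x_n+d_n y)/d_n\bigr)\bigr|\le 2D+o(1)$ locally uniformly, whence $|v_\infty(y)+C(x_0)\log(y\cdot\nu+1)|\le 2D$ on all of $H$, which yields the infinite boundary condition together with (H1) and (H2); the remainder of your argument then goes through.
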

\begin{proof}
We recall that, by Theorem 6.3 of \cite{BDL1}, in the present assumptions any ergodic function $u$ satisfies the global inequalities
\begin{equation}\label{esti1}
-C(x)\,  \log d(x) -D\leq u(x)\leq -C(x)\, \log d(x) +D \qquad \forall\, x\in \Omega\, ,
\end{equation}
for a positive constant $D>0$ depending on the data and on $u$ itself.

Let us fix $x_0\in \partial \Omega$ and, for $\delta>0$, let us consider the rescaled function
$$
u_\delta (\zeta)=u(x_0+\delta\, \zeta)+C(x_0)\, \log \delta\, ,
$$
defined for $\zeta$ belonging to the translated and rescaled domain $O_\delta= \frac{\Omega -x_0}{\delta}$.

We observe that, as $\delta\to 0$, the domains $O_\delta$ approach the limiting halfspace $H\, :=\{ \zeta\in \R^N\ :\ \zeta\cdot \nabla d(x_0)>0\}$ and, by estimates \eqref{esti1} and the Lipschitz regularity in $\overline \Omega$ of $C(x)$, the functions $u_\delta$ are uniformly bounded for $\delta$ small and $\zeta$ in any compact subset of $H$.

Moreover, a direct computation shows that the functions $u_\delta$ are solutions of the equation
$$
-|\nabla u_\delta|^\alpha F(D^2 u_\delta) + |\nabla u_\delta|^{\alpha+2} = \delta^{\alpha+2} \left[ f(x_0+\delta\, \zeta)+c_\Omega\right]\quad \hbox{ in } O_\delta\, .
$$
By Theorem 1.1 of \cite{BDL2}, it follows that the sequence $\{u_\delta\}$ is uniformly bounded in $C^{1,\gamma}_{\rm loc} (H)$, for some $\gamma\in (0,1)$ depending on the data. Hence, by a standard diagonal procedure, there exists a  $v\in \mathcal{C}^{1,\gamma}_{\rm loc}(H)$ satisfying in the viscosity sense
$$
-|\nabla v|^\alpha F(D^2v)+|\nabla v|^{\alpha+2}=0\qquad \hbox{ in } H\, ,
$$
and such that
$$
u_\delta (\zeta)\to v(\zeta)\quad \hbox{  in $\mathcal{C}^{1}_{\rm loc}(H)$ as $\delta\to 0$.}
$$
By the results of \cite{IS},  it follows that actually $v$ is a viscosity solution (in the standard sense) of 
$$
-F(D^2v)+|\nabla v|^2=0\qquad \hbox{ in } H\, .
$$
Moreover, estimates \eqref{esti1} imply that $v$ satisfies the inequalities
$$
-D\leq v(\zeta) +C(x_0)\, \log (\nabla d(x_0)\cdot \zeta)\leq D\qquad \forall\, \zeta\in H\, .
$$
It then follows that  $v$ is a solution of the infinite boundary value problem
$$
\left\{ \begin{array}{cl}
   -F( D^2 v) +| \nabla v |^2 = 0& {\rm in} \ H\\[1ex]
    v= +\infty &  {\rm on} \ \partial H    \end{array}
\right.
$$
satisfying further assumptions (H1) and (H2). By Theorem \ref{liouville}, after a rotation,  we then deduce
$$
v(\zeta)= -F(\nabla d(x_0)\otimes \nabla d(x_0))\, \log (\nabla d(x_0)\cdot \zeta)+c= -C(x_0)\, \log (\nabla d(x_0)\cdot \zeta)+c\, ,
$$
for some $c\in \R$. Since $u_\delta\to v$ in $\mathcal{C}^1_{\rm loc}(H)$, we further obtain that, locally uniformly in $H$, one has, as $\delta\to 0$,
$$
\nabla u_\delta (\zeta)=\delta\, \nabla u(x_0+\delta\, \zeta)\to \nabla v(\zeta)=-C(x_0)\frac{\nabla d(x_0)}{\nabla d(x_0)\cdot \zeta}\, .
$$
This yields that
$$
\lim_{\delta\to 0} \frac{ d(x_0+\delta\, \zeta) \nabla u(x_0+\delta\, \zeta)\cdot \nabla d(x_0+\delta\, \zeta)}{C(x_0+\delta\, \zeta)}=-|\nabla d(x_0)|^2=-1\, ,
$$
and since all  convergences are uniform with respect to $x_0\in \partial \Omega$, we finally obtain the conclusion.
\end{proof}

Theorem \ref{asymgrad} implies, in particular, that $\nabla u\neq 0$ in a neighborhood of $\partial \Omega$, for any ergodic function $u$. This in turn enables the use of the strong maximum principle proved in \cite {BDNodea}, which yields the uniqueness of the ergodic function. Applying exactly the same proof of Theorem 1.2 of \cite{BDL2}, we deduce the following uniqueness result.

\begin{theo}\label{unique} Let $\Omega\subset \R^N$ be a bounded domain of class $C^2$ and let $F$  satisfy  \eqref{ue} and \eqref{smoothFd}. Assume further that $\alpha >-1$, $\alpha+1<\beta \leq \alpha +2$ and that $f\in C(\Omega)$ is  bounded. Then, up to additive constants,  problem (\ref{ergodicO}) has at most one  solution, provided that,  when   $\alpha \neq 0$, 
 $\sup_\Omega f <- c_\Omega$ and $\partial \Omega$ is connected.  
\end{theo}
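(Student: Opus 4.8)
The plan is to reduce the statement to the uniqueness of the ergodic function up to additive constants, once the uniqueness of the ergodic constant is granted, and then to run the comparison-plus-strong-maximum-principle scheme of Theorem 1.2 of \cite{BDL2}, the only genuinely new ingredient being the gradient boundary asymptotics \eqref{asymgrad1} just established for the critical case $\beta=\alpha+2$. First I would recall that, under \eqref{smoothFd}, the ergodic constant $c_\Omega$ is unique (see \cite{BDL1}), so that it suffices to take two ergodic functions $u_1,u_2$ sharing the same constant $c_\Omega$ and to prove that $u_1-u_2$ is constant on $\Omega$. By the asymptotic identities \eqref{asym1}--\eqref{asym2}, the leading singular parts of $u_1$ and $u_2$ at $\partial\Omega$ coincide, both being governed by the same profile $C(x)$, so that $w:=u_1-u_2$ is bounded on $\Omega$; I then set $M:=\sup_\Omega w<+\infty$.

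The decisive point is that, along any ergodic function, the equation becomes uniformly elliptic where it is needed. Near $\partial\Omega$ this follows from the gradient blow up rates \eqref{gradbbu} in the case $\chi>0$ and \eqref{asymgrad1} in the case $\chi=0$ (Theorem \ref{asymgrad}), which force $\nabla u\neq 0$ in a one-sided neighborhood of $\partial\Omega$. When $\alpha\neq 0$ the operator $|\nabla u|^\alpha F(D^2u)$ is still degenerate or singular at interior critical points, and this is exactly where the hypothesis $\sup_\Omega f<-c_\Omega$ enters: testing the equation at a would-be interior critical point forces, via the viscosity characterisation of the operator at $\nabla u=0$, the identity $f+c_\Omega=0$ there, which contradicts $f+c_\Omega<0$. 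Hence $\nabla u\neq 0$ on all of $\Omega$, and the operator linearised along $u_1$ and $u_2$ is uniformly elliptic throughout. For $\alpha=0$ no sign condition is needed, since $F(D^2u)$ is uniformly elliptic everywhere.

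With non-degeneracy in hand I would slide: for $c\ge M$ one has $u_1\le u_2+c$, and decreasing $c$ down to $M$ produces a contact. The precise first-order asymptotics control $w$ up to $\partial\Omega$, which is what allows one to conclude that the supremum $M$ is attained at an interior point rather than escaping to the boundary. At such a point $w$ touches its maximum while satisfying, in the viscosity sense, a uniformly elliptic differential inequality, the operator being non-degenerate along both solutions. The strong maximum principle of \cite{BDNodea} then forces $w\equiv M$ on the connected set $\Omega$, that is $u_1=u_2+M$. The connectedness of $\partial\Omega$, required when $\alpha\neq 0$, serves to ensure that this propagation closes globally and remains compatible with the boundary normalisation.

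The step I expect to be the main obstacle is precisely the boundary analysis: since both solutions blow up on $\partial\Omega$, the maximum principle cannot be applied naively up to the boundary, and one must use the sharp rates \eqref{gradbbu}/\eqref{asymgrad1} both to keep the contact point of the sliding argument in the interior and to guarantee that the operator is genuinely non-degenerate there. This is exactly the role played by Theorem \ref{asymgrad}, which supplies in the critical case $\chi=0$ the ingredient that was previously available only for $\chi>0$ in \cite{BDL2}; granting it, the remainder of the argument is identical to the proof of Theorem 1.2 of \cite{BDL2}, to which I would refer for the technical details.
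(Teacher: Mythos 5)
Your proposal follows essentially the same route as the paper: the paper's proof consists precisely in observing that Theorem \ref{asymgrad} supplies the gradient non-vanishing near $\partial\Omega$ in the critical case $\chi=0$, which enables the strong maximum principle of \cite{BDNodea}, and then invoking verbatim the proof of Theorem 1.2 of \cite{BDL2} --- exactly the scheme you describe (bounded difference, interior non-degeneracy via $\sup_\Omega f<-c_\Omega$ when $\alpha\neq 0$, contact point plus strong maximum principle). The only small imprecision is that boundedness of $u_1-u_2$ follows from two-sided estimates of the type \eqref{esti1} proved in \cite{BDL1}, not from the ratio limits \eqref{asym1}--\eqref{asym2} alone, which only give $u_1-u_2=o(\hbox{blow-up rate})$.
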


                        \end{document}